\date{}
\newtheorem{thm}{\bf Theorem}[section]
\newtheorem{defn}[thm]{\bf Definition}
\newtheorem{rem}[thm]{\bf Remark}
\begin{document}
\title[Character degree graph of solvable groups with odd degree]{CHARACTER DEGREE GRAPH OF SOLVABLE GROUPS WITH ODD DEGREE}
 
\author [G. Sivanesan]{G. Sivanesan}
\address{Department of Mathematics, Government College of Engineering, Salem 636011, Tamil Nadu, India, ORCID: 0000-0001-7153-960X.}
\email{sivanesan@gcesalem.edu.in}

\author [C. Selvaraj]{C. Selvaraj}
\address{Department of Mathematics, Periyar University, Salem 636011, Tamil Nadu, India,  ORCID: 0000-0002-4050-3177.}
\email{selvavlr@yahoo.com}

\keywords{character graph, Eulerian graph, non-regular graph, finite solvable group}
\subjclass[2020]{05C45, 20F16, 20C15}
 
\begin{abstract}Let $G$  be a finite group,  let $Irr(G)$ be the set of all complex irreducible characters of $G$ and let $cd(G)$ be the set of all degrees of characters in $Irr(G).$  Let $\rho(G)$ be the set of primes that divide degrees in $cd(G).$ The character degree graph $\Delta(G)$ of $G$ is the simple undirected graph with vertex set $\rho(G)$ and in which two distinct vertices $p$ and $q$ are adjacent  if there exists a  character degree $r \in cd(G)$ such that $r$ is divisible by the product $pq.$ In this paper, we obtain a necessary condition for the character degree graph $\Delta(G)$ with all of its vertices are odd degree  of a finite solvable group $G$ .
\end{abstract}
\date{}
\maketitle
 
\section{Introduction}  Throughout this paper, $G$ will be a finite solvable group with identity $1.$ We denote the set of complex irreducible characters of $G$ by $Irr(G).$ Here $cd(G) =\{\chi(1) \mid \chi\in Irr(G)\}$ is the set of all distinct degrees of irreducible characters in $Irr(G).$ Let $\rho(G)$ be the set of all primes that divide degrees in $cd(G).$  There is a vast literature  devoted to study of ways through which one can associate a graph with a group and that literature can be used  for investigating the algebraic structure of groups  using graph theoretical properties of  associated graphs. One of these graphs is the character degree graph $\Delta(G)$ of $G$.   In fact, $\Delta(G)$ is an undirected simple graph with vertex set $\rho(G)$ in which $p,q\in\rho(G)$ are joined by an edge if there exists a character degree $\chi(1)\in cd(G)$ which is divisible by $pq.$  This graph was first defined in  \cite{Manz2} and studied by many authors (see \cite{Lewis4},\cite {PP1} and\cite{CBS}). When $G$ is a solvable group, some interesting results on the character graph of $G$ have been obtained. Actually, Manz  \cite{Manz1} proved that $\Delta(G)$ has at most two connected components. Also, Manz et al.\cite{Manz3} have proved that diameter of $\Delta(G)$ is at most $3$. Mahdi Ebrahimi et al.  \cite{EI} proved that the character graph $\Delta(G)$ of a solvable group $G$ is Hamiltonian if and only if $\Delta(G)$ is a block with at least $3$ vertices.   \cite{SST} We obtain a necessary condition for the character degree graph $\Delta(G)$  of a finite solvable group $G$ to be Eulerian.   Motivated by these studies on character degree graphs of solvable groups, we find a  necessary condition for the character degree graph $\Delta(G)$ of a solvable group $G$ with odd degree vertices only.
\section{Preliminaries}
In this section, we present some preliminary results which are used in the paper. All graphs are assumed to be simple, undirected and finite. Let $\Gamma$ be a graph with vertex set $V(\Gamma)$ and edge set $E(\Gamma)$. If $\Gamma$ is connected, then the distance $d(u,v)$ between two distinct vertices $u,v\in V(\Gamma)$ is the length of the shortest path between them. The supremum of all distance between possible pairs of distinct vertices is known as the diameter of the graph. A complete graph with $n$ vertices in which any of the two distinct vertices are adjacent is denoted by $ K_n.$  A cycle with $n$ vertices is denoted by $C_n.$ The vertex connectivity $k(\Gamma)$ of $\Gamma$ is defined to be the minimum number of vertices whose removal from $\Gamma$ results in a disconnected subgraph of $\Gamma.$ A cut vertex $v$ of a graph $\Gamma$ is a vertex  such that the number of connected components of $\Gamma -v$ is more than the number of connected components of $\Gamma$. Similar definition is applicable in the case of cut edge of graph $\Gamma.$  A maximal connected subgraph without a cut vertex is called a block. By their maximality, different blocks of $\Gamma$ overlap in at most one vertex, which is then a cut vertex. Thus, every edge of $\Gamma$ lies in a unique block and $\Gamma$ is the union of its blocks. The degree of a vertex $v$ in $\Gamma$ is the number of edges incident with $v$ and the same is denoted by $ d(v)$ or deg$v$. A graph $\Gamma$ is called $k$-regular, if the degree of each vertex is $k$. 
A graph $\Gamma$ is said to be Eulerian if it contains a cycle containing all vertices of $\Gamma.$
We will take into account the following well known facts concerning character degree graphs and they are needed in the next section.

\begin{rem}\normalfont\label{rem2.1} For results regarding $\Delta(G),$ we start with P\'{a}lfy's three prime theorem on the character degree graph of solvable groups. P\'{a}lfy theorem~\cite[Theorem, p. 62]{PP} states that given a solvable group $G$ and any three distinct vertices of $\Delta(G)$, there exists an edge incident with the other two vertices. On applying P\'{a}lfy's theorem, $\Delta(G)$  has at most two connected components. 
\end{rem}
\begin{rem}\normalfont\label{rem2.2} Let $G$ be a solvable group. Then $diam(\Delta(G))\leq 3$    \cite  [Theorem 3.2]{Manz3}. Assume that the diameter $diam(\Delta(G))=3$ and let $r,s\in\rho(G)$ be two distinct vertices in $\Delta(G)$ such that distance $d(r,s)=3.$  When $\Delta(G)$ has exactly diameter $3$ and contains at least $5$ vertices,  Lewis~\cite[p. 5487]{Lewis2} proved that the vertex set $\rho(G)$ of $\Delta(G)$ can be partitioned into $\rho_1,\rho_2,\rho_3$ and $\rho_4.$ Actually $\rho_4$ is the set of all vertices of $\Delta(G)$ which are at distance $3$ from the vertex $r$ and so we have that $s\in \rho_4,$ $\rho_3$ is  the set of all vertices of $\Delta(G)$ which are distance $2$ from the vertex $r,$ $\rho_2$ is the set of all vertices that are adjacent to vertex $r$ and adjacent to some prime in $\rho_3$ and $\rho_1$ consists of $r,$ the set of all vertices which are adjacent to $r$ and not adjacent to any vertex in $\rho_3.$  This implies that, no vertex in $\rho_1$ is adjacent to any vertex in $\rho_3\cup\rho_4$ and no vertex in $\rho_4$ is adjacent to any vertex in $\rho_1\cup\rho_2,$ every vertex in $\rho_2$ is adjacent to some vertex in $\rho_3$ and vice versa, and $\rho_1\cup\rho_2$ and $\rho_3\cup\rho_4$ both induce complete subgraphs of $\Delta(G).$
\end{rem}
\begin{rem}\normalfont\label{rem2.3} 
Huppert    \cite[p. 25]{HU} listed all possible graphs $\Delta(G)$ for solvable groups $G$ with at most $4$ vertices.   In fact, every graph with $3$ or few vertices that satisfies P\'{a}lfy's condition occurs as $\Delta(G)$ for some solvable group $G.$ 
\end{rem}
\begin{thm}\cite[Lemma 2.7]{EI}\label{2.2} Let $G$ be a group with $\mid\rho(G)\mid\geq 3.$  If $\Delta(G)$ is not a block and the diameter of $\Delta(G)$ is at most $2$, then each block of $\Delta(G)$ is a complete graph.
\end{thm}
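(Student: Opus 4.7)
The natural structural device is the block-cut-vertex tree of $\Delta(G)$: since $\Delta(G)$ is not a block, this tree has at least two block-nodes, and the first goal is to show it has at most one cut-vertex-node. Suppose toward contradiction that $c_1 \neq c_2$ are two distinct cut vertices. Removing $c_1$ disconnects $\Delta(G)$, so we may choose a vertex $x$ in a component of $\Delta(G) - c_1$ not containing $c_2$; symmetrically choose $y$ in a component of $\Delta(G) - c_2$ not containing $c_1$. Every $x,y$-path must then visit both $c_1$ and $c_2$, so even the shortest possible arrangement $x - c_1 - c_2 - y$ has length $3$, violating the hypothesis that the diameter is at most $2$. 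Hence there is a unique cut vertex $c$ and every block of $\Delta(G)$ contains $c$.

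Next I would show that $c$ is a universal vertex. Given any $v \neq c$, it lies in a unique block $B'$, and since $\Delta(G)$ has at least one other block we can pick $w \neq c$ outside $B'$. The cut vertex $c$ separates $v$ from $w$, so $d(v,w) \geq 2$, and the diameter hypothesis forces the length-two path $v - c - w$; in particular $vc \in E(\Delta(G))$.

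Finally, fix a block $B$. If $|V(B)| \leq 2$ the conclusion is automatic, so assume $|V(B)| \geq 3$; since $c \in V(B)$ is adjacent to everything by the previous step, it remains to take $u, w \in V(B) \setminus \{c\}$ and prove $uw \in E(\Delta(G))$. Pick any $z \in V(B'') \setminus \{c\}$ for some block $B'' \neq B$. Because $u$ and $z$ are non-cut vertices lying in distinct blocks and every edge of $\Delta(G)$ belongs to a unique block, $uz \notin E(\Delta(G))$, and similarly $wz \notin E(\Delta(G))$. Applying P\'{a}lfy's three-prime theorem (Remark~\ref{rem2.1}) to the triple $\{u, w, z\}$ then forces $uw$ to be an edge, so $B$ is complete. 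The step I expect to require the most care is the uniqueness-of-cut-vertex claim: one must locate $x$ and $y$ in appropriate components of $\Delta(G) - c_1$ and $\Delta(G) - c_2$ so that every $x,y$-path provably has length at least three, since a careless choice only gives length two when $c_1$ and $c_2$ happen to be adjacent.
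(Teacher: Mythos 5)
The paper itself gives no proof of this statement: it is quoted verbatim from [EI, Lemma 2.7], so there is nothing internal to compare against. Judged on its own, your argument is essentially correct and is the natural reconstruction: uniqueness of the cut vertex from the diameter bound, universality of that cut vertex, and then P\'alfy's three-prime condition applied to two vertices of a block together with a vertex of another block to force the missing edges. Two points deserve explicit attention. First, the assertion ``every $x,y$-path must visit both $c_1$ and $c_2$'' is true for your choice of $x$ and $y$, but it is exactly the step you flagged and it does need a short argument: one must check that $y$ does not lie in the component of $\Delta(G)-c_1$ containing $x$ (equivalently, that the chosen component $A$ of $\Delta(G)-c_1$ and the chosen component $B$ of $\Delta(G)-c_2$ are disjoint); this follows because $A\cup\{c_1\}$ induces a connected subgraph avoiding $c_2$, so if $y\in A$ then $c_1$ would lie in the component of $\Delta(G)-c_2$ containing $y$, contradicting the choice of $B$. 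Without this line the contradiction $d(x,y)\ge 3$ is asserted rather than proved. Second, your use of P\'alfy's theorem (Remark~\ref{rem2.1}) requires $G$ solvable; the statement as transcribed says only ``group,'' but this is covered by the paper's standing assumption that $G$ is a finite solvable group, and the hypothesis that the diameter is at most $2$ also implicitly supplies the connectivity your distance arguments use. With these small clarifications the proof is complete.
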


\begin{thm} \cite[Theorem 5]{Zha}\label{2.3}  The graph with four vertices in Figure 1 is not the character degree graph of a solvable group.
	
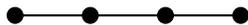
\begin{figure}[ht]
\centering
\begin{tikzpicture}
\draw[fill=black] (0,0) circle (3pt);
 \draw[fill=black] (1,0) circle (3pt);
 \draw[fill=black] (2,0) circle (3pt);
 \draw[fill=black] (3,0) circle (3pt);
\draw[thick] (0,0) --(1,0) --(2,0) --(3,0);
\end{tikzpicture}
 \caption{Graph with four vertices}
\end{figure}
\end{thm}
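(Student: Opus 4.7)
My plan is to argue by contradiction. Suppose $G$ is a solvable group with $\Delta(G)$ equal to the path on vertex set $\rho(G)=\{p_1,p_2,p_3,p_4\}$ and edges $p_1p_2$, $p_2p_3$, $p_3p_4$. A quick case check shows that every triple of these four vertices contains at least one edge, so P\'alfy's three--prime condition of Remark~\ref{rem2.1} is satisfied and does not immediately rule out this graph. However, $d(p_1,p_4)=3$, so $\mathrm{diam}(\Delta(G))=3$, which is the boundary case of Remark~\ref{rem2.2}.

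I would then try to mimic the Lewis partition. Although Remark~\ref{rem2.2} is formally stated for $|\rho(G)|\ge 5$, the distance definitions still yield $\rho_1=\{p_1\}$, $\rho_2=\{p_2\}$, $\rho_3=\{p_3\}$, $\rho_4=\{p_4\}$. I would revisit the character--theoretic construction behind Lewis's partition (separating primes according to their behaviour with respect to a chosen minimal normal subgroup and its complement), looking for a step that forces at least one of the $\rho_i$ to contain a further prime. If this works, we immediately obtain $|\rho(G)|\ge 5$, contradicting our assumption.

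If that route does not close the problem directly, I would pass to a counterexample $G$ of minimal order. The edges $p_ip_{i+1}$ supply irreducible characters with degrees divisible by $p_ip_{i+1}$, while the non--edges---especially the distance-three pair $\{p_1,p_4\}$---forbid any character degree divisible by $p_1p_3$, $p_1p_4$, or $p_2p_4$. Minimality, together with Remark~\ref{rem2.3} (which lists all realizable character degree graphs on at most four vertices), forces $\Delta(G/N)$ to drop to a graph on fewer than four vertices for every non-trivial normal subgroup $N$. This severely restricts the Fitting structure of $G$, and combining it with Gluck--Wolf--type results on coprime actions yields constraints on how the Sylow $p_1$-- and $p_4$--subgroups act on the Sylow $p_2$-- and $p_3$--parts of $F(G)$.

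The main obstacle, I expect, is the final contradiction: the three edges demand substantial "mixing" of the four primes inside character degrees, while the three non--edges impose rigid separations, and proving that these two requirements cannot be simultaneously realized in a finite solvable group is the technical heart of the argument. This is precisely the content of Zhang's analysis in \cite{Zha}, and completing the proof amounts to carrying out that delicate character--theoretic bookkeeping against the minimal--counterexample structure above.
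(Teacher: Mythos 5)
Your proposal does not actually prove the statement. In the paper this result is simply quoted from Zhang \cite[Theorem 5]{Zha} without proof, so the only way a blind attempt can succeed is by genuinely establishing that the path $p_1-p_2-p_3-p_4$ cannot occur; your text instead sets up a frame (P\'alfy's condition is satisfied, the diameter is $3$, pass to a minimal counterexample) and then explicitly defers the decisive step to ``Zhang's analysis'' and ``delicate character--theoretic bookkeeping'' that you never carry out. Announcing where the contradiction should come from is not a proof of it, so there is a genuine gap: the entire character-theoretic core of the argument is missing.

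Two of the intermediate steps are also shakier than you suggest. First, the idea of re-running Lewis's diameter-three partition to ``force at least one of the $\rho_i$ to contain a further prime'' is unsupported: the partition by distance from $p_1$ exists for any diameter-three graph and is perfectly consistent with $|\rho_i|=1$ for all $i$; the fact that a solvable group with diameter-three character degree graph needs at least five vertices is essentially equivalent to (and in the literature rests on) the very theorem you are trying to prove, so this route is close to circular. Second, the appeal to ``Gluck--Wolf--type results on coprime actions'' is not tied to any concrete configuration: you would need to identify a specific normal structure (e.g.\ analyze $F(G)$, the degrees afforded by characters lying over a minimal normal subgroup, and how the non-edges $p_1p_3$, $p_1p_4$, $p_2p_4$ restrict them) and derive an actual numerical contradiction. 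As written, the proposal is a plausible research plan, not a proof.
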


\begin{thm}\cite[Theorem 1.1]{Lewis5}\label{2.4}  Let $G$ be a solvable group. Then $\Delta(G)$ has at most one cut vertex.
\end{thm}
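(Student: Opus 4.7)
The plan is to argue by contradiction: suppose $\Delta(G)$ has two distinct cut vertices $u$ and $v$. First I would dispose of the disconnected case. If $\Delta(G)$ is disconnected and a component $C$ of it contains a cut vertex $w$, then picking $a,b$ in different components of $C-w$ and any vertex $c$ in the other component of $\Delta(G)$ yields three pairwise non-adjacent vertices, contradicting P\'alfy's three-prime theorem (Remark~\ref{rem2.1}). Hence a disconnected character degree graph carries no cut vertex, and from now on I may assume $\Delta(G)$ is connected.

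I would then split on the diameter. For $|\rho(G)|\le 4$ the claim is immediate from Huppert's classification (Remark~\ref{rem2.3}) together with Theorem~\ref{2.3}: one simply checks that no admissible graph on at most four vertices has two cut vertices. When $diam(\Delta(G))\le 2$ and $|\rho(G)|\ge 3$, Theorem~\ref{2.2} forces every block of $\Delta(G)$ to be complete. Working in the block-cut tree, each of $u$ and $v$ sits on a branch leading away from the other, so I can pick leaf blocks $B_u, B_v$ on the respective sides and non-cut vertices $a\in B_u$, $b\in B_v$. Any $a$-to-$b$ walk must exit $B_u$ through $u$ and later enter $B_v$ through $v$, giving $d(a,b)\ge 3$ and contradicting $diam(\Delta(G))\le 2$.

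The remaining case is $diam(\Delta(G))=3$ with $|\rho(G)|\ge 5$. I would invoke the Lewis partition $\rho(G)=\rho_1\sqcup\rho_2\sqcup\rho_3\sqcup\rho_4$ of Remark~\ref{rem2.2}: the two sets $\rho_1\cup\rho_2$ and $\rho_3\cup\rho_4$ induce complete subgraphs, and every cross edge joins $\rho_2$ to $\rho_3$. A cut vertex must therefore lie in $\rho_2\cup\rho_3$, since deleting a vertex from one of the two cliques preserves that clique and leaves all cross edges intact. A vertex $u\in\rho_2$ can be a cut vertex only if it is the unique element of $\rho_2$ incident with any cross edge, and symmetrically for $\rho_3$. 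This forces the only way to have two cut vertices to be one $u\in\rho_2$ and one $v\in\rho_3$, with $uv$ the unique bridge between the two cliques.

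The main obstacle is exactly this last configuration, two cliques joined by a single cross edge, which is consistent with P\'alfy's condition and with everything in the Preliminaries; it cannot be ruled out by graph-theoretic considerations alone. Closing the proof therefore requires the character-theoretic input of Lewis~\cite{Lewis5}, in particular a fine study of the normal structure of $G$ (chief factors, the Fitting subgroup, and which primes can appear on each side of the diameter-$3$ decomposition), refining the diameter bound to show that no solvable group realises two cliques linked by a single edge as its character degree graph. I expect this final character-theoretic step to be the deepest and most delicate part of the argument.
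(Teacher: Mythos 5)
This statement is not actually proved in the paper: it is quoted verbatim from Lewis and Meng \cite{Lewis5} (their Theorem 1.1), so there is no internal argument to compare yours against. Judged on its own merits, your proposal is a correct \emph{reduction} but not a complete proof. The disconnected case via P\'alfy's three-prime condition is fine; your diameter-$\leq 2$ argument is also fine, and in fact you need neither Theorem~\ref{2.2} nor completeness of blocks there, since the existence of two cut vertices $u,v$ always yields vertices $a,b$ every one of whose connecting paths passes through both $u$ and $v$, whence $d(a,b)\geq 3$ in any graph; the small cases follow as you say from Huppert's list together with Zhang's exclusion of the path on four vertices (Theorem~\ref{2.3}); and in the diameter-$3$ case your analysis of the Lewis partition is correct: a cut vertex of $\Delta(G)$ must lie in $\rho_2\cup\rho_3$, a vertex $u\in\rho_2$ can be a cut vertex only if $\rho_2=\{u\}$ (every vertex of $\rho_2$ has a neighbour in $\rho_3$), and symmetrically for $\rho_3$, so two cut vertices would force $|\rho_2|=|\rho_3|=1$, i.e.\ two cliques joined by a single bridge.

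The genuine gap is that you leave exactly this last configuration open, deferring it to ``character-theoretic input of Lewis''; but that configuration is the entire substance of the theorem, since (as you note) it satisfies P\'alfy's condition and cannot be excluded graph-theoretically. So as a standalone argument the proposal does not prove the statement. Within the paper's own toolkit, however, the gap is closed by the quoted Theorem~\ref{2.5} (= \cite[Lemma 2.1]{Lewis5}): in the diameter-$3$ case it says that $\Delta(G)$ is $1$-connected only if $|\rho_2|=1$, and that the unique prime of $\rho_2$ is then the \emph{unique} cut vertex, which rules out a second cut vertex in $\rho_3$ and hence the bridge configuration. Your reduction combined with Theorem~\ref{2.5} would give a complete derivation of Theorem~\ref{2.4}; of course both statements come from the same paper of Lewis and Meng, where the character-theoretic work you correctly anticipate (normal structure, Fitting subgroup, control of the primes on each side of the diameter-$3$ partition) is actually carried out.
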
	

\begin{thm}\cite[Theorem A]{MZ}\label{2.4.1} If $\Delta(G)$ is a non-complete and regular character degree graph of a finite solvable group $G$ with $n$ vertices, then $\Gamma(G)$ is $n-2$ is a regular graph.
\end{thm}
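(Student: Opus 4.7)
I would proceed by contradiction, assuming that $\Delta(G)$ is $k$-regular and non-complete on $n$ vertices with $k \le n-3$. The first move is to pass to the complement $\overline{\Delta(G)}$, which is then $(n-1-k)$-regular with minimum degree $\ge 2$. By P\'alfy's three-prime theorem (Remark~\ref{rem2.1}), no three vertices of $\Delta(G)$ can be pairwise non-adjacent, so $\overline{\Delta(G)}$ is triangle-free. A triangle-free graph with minimum degree at least $2$ necessarily contains an induced cycle of length at least $4$, so I would pick a shortest induced cycle $v_1 v_2 \cdots v_\ell v_1$ in $\overline{\Delta(G)}$ and use it as the local obstruction.

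The next step is to translate this cycle into a forbidden configuration in $\Delta(G)$. Four consecutive vertices $v_1, v_2, v_3, v_4$ on the cycle satisfy $v_1v_2, v_2v_3, v_3v_4 \notin E(\Delta(G))$, while triangle-freeness of $\overline{\Delta(G)}$ forces $v_1v_3, v_2v_4 \in E(\Delta(G))$. If $\ell = 4$ these four vertices induce a $2K_2$ in $\Delta(G)$; if $\ell \ge 5$, then also $v_1v_4 \in E(\Delta(G))$, producing an induced $P_4$ of the kind ruled out by Zhang's Theorem~\ref{2.3}. Combined with the $n-k-1 \ge 2$ remaining vertices outside the cycle, P\'alfy applied to many triples that include one outside vertex and two consecutive cycle vertices would then be used to pin down the adjacencies of the rest of $\Delta(G)$.

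The endgame splits on connectivity and diameter. If $\Delta(G)$ is disconnected, P\'alfy forces each connected component to be a clique, so regularity yields $\Delta(G) = K_{k+1} \sqcup K_{k+1}$ with $n = 2(k+1)$; here one checks directly that the hypothesis $k \le n-3$ is violated. If $\Delta(G)$ is connected with diameter at most $2$, Theorem~\ref{2.2} together with the single-cut-vertex bound of Theorem~\ref{2.4} makes the block structure too rigid to accommodate the $2K_2$ or $P_4$ configuration found above. If the diameter is $3$, I would invoke the Lewis partition $\rho_1 \cup \rho_2 \cup \rho_3 \cup \rho_4$ from Remark~\ref{rem2.2}: a vertex $r \in \rho_1$ has degree $|\rho_1| - 1 + |\rho_2|$, a vertex $s \in \rho_4$ has degree $|\rho_3| + |\rho_4| - 1$, so $k$-regularity forces $|\rho_1| + |\rho_2| = |\rho_3| + |\rho_4|$, and comparing degrees of vertices in $\rho_2$ and $\rho_3$ (which vary according to how many cross-edges they have) contradicts regularity once the extremal structure is fully in place.

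The main obstacle will be step two: Zhang's theorem forbids $P_4$ as the entire graph $\Delta(G)$, not as an induced subgraph, so a direct invocation is not possible. Some reduction is needed — perhaps considering $\Delta(G/N)$ for a suitable normal subgroup $N$ so that the induced $P_4$ becomes the whole character degree graph of a quotient, or an inductive argument on $n$ — in order to make the forbidden configuration bite globally. Handling the longer induced cycles ($\ell \ge 5$) in $\overline{\Delta(G)}$ will require the most care, since the induced structure in $\Delta(G)$ there is not $P_4$ but a denser graph on $\ell$ vertices, and ruling it out seems to require combining P\'alfy with regularity across all $\ell$ vertices simultaneously.
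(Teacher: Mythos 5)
First, note that the paper does not prove this statement at all: it is quoted verbatim as Theorem A of Morresi Zuccari \cite{MZ} (the garbled phrasing ``$\Gamma(G)$ is $n-2$ is a regular graph'' means $\Delta(G)$ is $(n-2)$-regular), and the proof in that reference is a genuinely character-theoretic one, analysing the normal structure of $G$ (Fitting subgroup, Clifford/Gallagher-type arguments on character degrees), not a combinatorial argument about graphs satisfying P\'alfy's condition. So there is no ``paper's proof'' for your sketch to parallel; the comparison has to be with \cite{MZ}, and your route is very different in kind.

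The gap you flag yourself is the fatal one, and it cannot be repaired with the tools you invoke. Zhang's theorem (Theorem~\ref{2.3}), Theorem~\ref{2.2}, Theorem~\ref{2.4} and the Lewis partition are all statements about the \emph{whole} graph $\Delta(G)$; there is no general device that turns an induced $P_4$ or $2K_2$ inside $\Delta(G)$ into the full character degree graph of a subgroup or quotient of $G$, so the forbidden-configuration step never ``bites globally.'' More fundamentally, the theorem is simply not a consequence of the graph-theoretic constraints you are using: the complement of the Petersen graph is a $6$-regular, non-complete graph on $10$ vertices whose complement is triangle-free, so it satisfies P\'alfy's three-prime condition, is regular, and is not $(n-2)$-regular. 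Hence any correct proof must rule out such graphs by group-theoretic input beyond P\'alfy, Zhang and the diameter/cut-vertex facts --- which is precisely what \cite{MZ} does. Your peripheral cases are fine (in the diameter-$3$ case regularity already fails because a vertex of $\rho_2$ is adjacent to everything a vertex of $\rho_1$ is adjacent to plus something in $\rho_3$; in the disconnected case P\'alfy forces two cliques), but those are the easy cases: the heart of the theorem is the connected, diameter-$2$, non-complete regular case, and there your outline has no workable mechanism.
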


\begin{thm}\cite[Lemma 2.1]{Lewis5}\label{2.5}  Let $G$ be a solvable group and assume that $\Delta(G)$ has diameter $3$. Then $G$ is $1$-connected if and only if $\mid\rho_2\mid=1$ in the diameter $3$ partition of  $\rho(G)$. In this case, if $p$ is the unique prime in $\rho_2$, then $p$ is also the unique cut vertex for $\Delta(G)$. In particular, $\Delta(G)$ has at most one cut vertex.
\end{thm}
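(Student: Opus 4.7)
The plan is to leverage the rigid structure of the diameter-$3$ partition laid out in Remark~\ref{rem2.2}. Because $\rho_1\cup\rho_2$ and $\rho_3\cup\rho_4$ each induce complete subgraphs and the only edges between these two halves are the $\rho_2$--$\rho_3$ cross edges, the entire connectivity question for $\Delta(G)$ reduces to controlling this bipartite bridge. I will repeatedly use the two supplementary facts that every vertex of $\rho_2$ has at least one neighbor in $\rho_3$ and vice versa, and that all four parts are nonempty because any $3$-geodesic $r,v_1,v_2,s$ forces $v_1\in\rho_2$ and $v_2\in\rho_3$.

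For the direction $|\rho_2|=1\Rightarrow G$ is $1$-connected, write $\rho_2=\{p\}$. Any path in $\Delta(G)$ from $r\in\rho_1$ to $s\in\rho_4$ must use at least one cross edge, and every cross edge is incident with $p$; hence removing $p$ separates $\rho_1$ from $\rho_3\cup\rho_4$, so $p$ is a cut vertex. For the converse I would argue contrapositively: assume $|\rho_2|\geq 2$ and $|\rho_3|\geq 2$ and show no vertex $v$ disconnects $\Delta(G)$. If $v\in\rho_1$ or $v\in\rho_4$ the bridge is untouched and both halves remain complete, so $\Delta(G)-v$ stays connected. If $v\in\rho_2$, then $\rho_2\setminus\{v\}$ is still nonempty and each of its vertices retains a neighbor in $\rho_3$, providing the needed cross edge; the case $v\in\rho_3$ is symmetric. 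Hence a cut vertex forces $|\rho_2|=1$ or $|\rho_3|=1$, and after swapping the roles of $r$ and $s$ (which interchanges $\rho_1\leftrightarrow\rho_4$ and $\rho_2\leftrightarrow\rho_3$) we may assume $|\rho_2|=1$.

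The ``in particular'' clause should then follow from the same case analysis: once $|\rho_2|=1$, no vertex outside $\rho_2$ can be a cut vertex unless we also have $|\rho_3|=1$. The main obstacle is ruling out this double-bottleneck situation $|\rho_2|=|\rho_3|=1$, which would yield two cut vertices $p$ and the unique $q\in\rho_3$. When $|\rho_1|=|\rho_4|=1$ this is precisely the forbidden path on four vertices of Theorem~\ref{2.3}; in the remaining configurations one must exclude a solvable group whose character degree graph exhibits two successive bridge vertices by appealing to the finer character-theoretic input of the cited work of Lewis (on top of P\'alfy's three-prime condition). Once that scenario is eliminated, $p$ emerges as the unique cut vertex, and the conclusion that $\Delta(G)$ has at most one cut vertex drops out.
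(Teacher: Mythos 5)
First, a point of comparison that matters here: the paper does not prove this statement at all --- it is quoted verbatim from Lewis and Meng (\cite[Lemma 2.1]{Lewis5}), so there is no internal proof to measure your argument against, and your attempt has to stand on its own as a proof of the cited lemma. On its own terms, the purely graph-theoretic parts are fine: by Remark~\ref{rem2.2} the only edges between the cliques on $\rho_1\cup\rho_2$ and $\rho_3\cup\rho_4$ are $\rho_2$--$\rho_3$ edges, so $|\rho_2|=1$ does force a cut vertex, and your case analysis showing that $|\rho_2|,|\rho_3|\ge 2$ excludes any cut vertex is correct (it is essentially Theorem~\ref{2.7}).

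The genuine gap is the one you flag yourself and then leave open: the uniqueness clause --- and with it ``$\Delta(G)$ has at most one cut vertex'' --- requires excluding the double-bottleneck configuration $|\rho_2|=|\rho_3|=1$, in which both the unique vertex of $\rho_2$ and the unique vertex of $\rho_3$ are cut vertices. Zhang's Theorem~\ref{2.3} only disposes of the four-vertex path; already with five vertices the graph with $|\rho_1|=2$ and $|\rho_2|=|\rho_3|=|\rho_4|=1$ satisfies P\'{a}lfy's condition and every partition fact quoted in Remark~\ref{rem2.2}, so no tool you actually use rules it out. Its exclusion is exactly the character-theoretic content of Lewis's diameter-three analysis that Lewis--Meng rely on, so ``appealing to the finer character-theoretic input of the cited work'' defers the heart of the lemma rather than proving it. The same issue touches your relabelling step: if the lemma is read for a fixed partition, the case $|\rho_3|=1$, $|\rho_2|\ge 2$ must likewise be excluded, not relabelled away, and your claim that swapping $r$ and $s$ interchanges $\rho_1\leftrightarrow\rho_4$ and $\rho_2\leftrightarrow\rho_3$ is asserted without the (short) verification it needs. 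Within this paper's framework you could instead have killed the double-bottleneck case by citing Theorem~\ref{2.4}, but as a proof of the original lemma that would be circular, since in Lewis--Meng that theorem is downstream of this lemma.
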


\begin{thm} \cite[Corollary B]{Lewis31}\label{2.6}  If $G$ is a solvable group with $ n= \mid \rho(G) \mid $ and $\Delta(G)$  contains two vertices of degree less than $ n-2$ that are not adjacent, then the Fitting height of $G$ is at least $3$.
\end{thm}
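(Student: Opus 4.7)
I would argue by contrapositive: assume the Fitting height $h(G)\leq 2$, i.e. $G$ is metanilpotent, and show that whenever two vertices $p,q\in\rho(G)$ are non-adjacent in $\Delta(G)$, at least one of them has degree $n-2$. This clearly rules out two non-adjacent vertices of degree $<n-2$.

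\textbf{Step 1 (Set-up).} Let $F=F(G)$, so $F$ and $G/F$ are both nilpotent. Write $F=\prod_{p}F_p$ as the direct product of its Sylow subgroups, and similarly $G/F=\prod_{p}(G/F)_p$. For $\chi\in Irr(G)$ pick $\theta\in Irr(F)$ lying under $\chi$; then $\chi(1)=[G:I_G(\theta)]\,\theta(1)$, with $[G:I_G(\theta)]$ dividing $|G/F|$ and $\theta(1)$ a product of character degrees of the nilpotent factor $F$, so of prime powers for primes in $\pi(F)$. Partition $\rho(G)$ into $\sigma=\rho(G)\cap\pi(G/F)$ (``top primes'') and $\tau=\rho(G)\cap\pi(F)\setminus\sigma$ (``bottom primes''). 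Primes in $\sigma$ contribute only through the index $[G:I_G(\theta)]$, while primes in $\tau$ contribute only through $\theta(1)$.

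\textbf{Step 2 (Structural dichotomy for non-adjacent pairs).} Suppose $p,q\in\rho(G)$ are non-adjacent. I would first show $p$ and $q$ cannot both lie in $\tau$: if both primes divided $|F|$, then taking Sylow characters of positive degree in $F_p$ and $F_q$ separately and inducing suitably through the nilpotent quotient $G/F$ would produce a character degree divisible by $pq$ (this is essentially a Gallagher/Clifford argument, since $F_p$ and $F_q$ are central factors in the nilpotent subgroup $F$ and cross-terms appear in $\theta(1)$). A symmetric (but more delicate) argument, using that $G/F$ is nilpotent and that the actions of the Sylow subgroups of $G/F$ on $F$ commute modulo inner action, should also exclude $p,q\in\sigma\setminus\tau$. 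One is thus forced into the dichotomy: up to relabelling, $p\in\sigma$ and $q\in\tau$ (with possible overlap handled by the same argument).

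\textbf{Step 3 (Universal adjacency of one endpoint).} Once the dichotomy is in place, I would show that $q$ (the ``bottom'' prime) is adjacent to every other vertex of $\Delta(G)$. For any third prime $r\neq p,q$: if $r\in\tau$, apply the cross-term argument from Step 2 to the Sylow subgroups $F_q,F_r$ of $F$ to manufacture a character degree divisible by $qr$; if $r\in\sigma$, take $\theta\in Irr(F_q)$ non-linear, then the action of the Sylow $r$-subgroup of $G/F$ on $F_q$ yields an orbit of size divisible by $r$, so some induced character degree is divisible by $qr$. (A parallel argument would show the top prime $p$ is adjacent to every other vertex if one prefers; either way, one of $p,q$ has degree $n-2$.)

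\textbf{Main obstacle.} The combinatorial/character-theoretic step is not Step 1 but Step 2--3, where one has to rule out, for each configuration of $p$ and $q$ among $\sigma\cup\tau$, the possibility that no character degree is divisible by the corresponding product. This requires carefully tracking how the Clifford correspondence interacts with the nilpotent structure of $F$ and $G/F$; the cleanest route is probably via the orbit theorem for coprime actions of nilpotent groups on $p$-groups, together with Gallagher's theorem to lift prime-power character degrees of $F$ to $G$. Corollary B of \cite{Lewis31} essentially packages this analysis, and any self-contained proof must reproduce it.
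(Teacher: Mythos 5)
The statement you are trying to prove is not proved in the paper at all: it is quoted verbatim from Lewis (Corollary~B of the cited paper \emph{Character Degree Graphs of Solvable Groups of Fitting Height 2}) and is used as a black box only in Remark~\ref{rem2.4}. So there is no internal proof to compare against, and your proposal has to be judged as an attempt to reprove Lewis's result; as it stands it is a framework, not a proof. Step~1 is fine (for a prime $p\in\rho(G)$ not dividing $|G/F|$, any $\chi\in Irr(G)$ with $p\mid\chi(1)$ forces $p\mid\theta(1)$ for $\theta\in Irr(F)$ under $\chi$, since $[G:I_G(\theta)]$ divides $|G:F|$; and two such ``bottom'' primes are then adjacent because $F$ is nilpotent and $Irr(F)$ is a direct product of the $Irr(F_p)$). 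The genuine gaps are in Steps~2 and~3.

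Concretely: (a) your exclusion of the case where both non-adjacent primes divide $|G/F|$ is only asserted (``a symmetric but more delicate argument''); it is not symmetric, because adjacency for top primes is controlled by stabilizer indices of the nilpotent group $G/F$ acting on $Irr(F)$, and orbit sizes do not multiply the way degrees of the direct factors $F_p$ do -- a character moved by a $p$-part and another character moved by a $q$-part need not combine into one character with stabilizer of index divisible by $pq$. This is precisely the hard case that Lewis's analysis handles, and your sketch gives no argument for it. (b) Step~3 proves too much: if it were true that for every $r$ dividing $|G/F|$ the Sylow $r$-subgroup of $G/F$ produces an orbit on $Irr(F_q)$ of size divisible by $r$ over a nonlinear character, the same statement with $r=p$ would make $p$ adjacent to $q$, contradicting your standing hypothesis that $p$ and $q$ are non-adjacent; in fact an $r$-Sylow of $G/F$ may centralize $F_q$ or stabilize the relevant characters, so the claimed divisibility is unjustified. (c) The handling of primes dividing both $|F|$ and $|G/F|$ (``possible overlap handled by the same argument'') is also left open. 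Since you yourself note that any self-contained proof must reproduce Lewis's Corollary~B, the honest conclusion is that the essential content -- the structural theorem for Fitting height $2$ groups with a non-adjacent pair, from which one of the two vertices is forced to have degree $n-2$ -- remains unproved in your proposal; citing \cite{Lewis31}, as the paper does, is the appropriate course unless that analysis is carried out in full.
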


\begin{thm}\cite[Lemma 4.1]{EII}\label{2.7}
Let $G$ be a solvable group  where the diameter of  $\Delta(G)$ is $3$ with Lewis partition $\rho(G)=\rho_1\cup\rho_2\cup\rho_3\cup\rho_4.$ Then $\Delta(G)$ is a block if and only if $\mid\rho_2\mid,\mid\rho_3\mid\geq 2.$
\end{thm}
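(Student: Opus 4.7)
The plan is to derive both implications from Theorem~\ref{2.5} (the $1$-connectedness criterion $|\rho_2|=1$) together with the symmetry of the Lewis partition under interchanging the two endpoints $r$ and $s$ of a realizing diameter-$3$ path. Since $\mathrm{diam}(\Delta(G))=3$ forces $\Delta(G)$ to be connected, the statement ``$\Delta(G)$ is a block'' is equivalent to ``$\Delta(G)$ has no cut vertex.''

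For the backward implication, assume $|\rho_2|,|\rho_3|\ge 2$. Then $|\rho_2|\ne 1$, so by the contrapositive of Theorem~\ref{2.5} the graph $\Delta(G)$ is not $1$-connected; it therefore has no cut vertex and, being connected, is a block.

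For the forward implication, assume $\Delta(G)$ is a block. Then $\Delta(G)$ has no cut vertex, so Theorem~\ref{2.5} gives $|\rho_2|\ne 1$. To rule out $|\rho_2|=0$, pick any path $r=v_0-v_1-v_2-v_3=s$ realizing $d(r,s)=3$: then $v_2$ is at distance $2$ from $r$ so $v_2\in\rho_3$, and $v_1$ is adjacent to both $r$ and $v_2\in\rho_3$, so $v_1\in\rho_2$. Hence $|\rho_2|\ge 1$ and therefore $|\rho_2|\ge 2$. To obtain $|\rho_3|\ge 2$, I would run exactly the same argument after interchanging the roles of $r$ and $s$: the partition produced by the recipe of Remark~\ref{rem2.2} with base vertex $s$ is $(\rho_4,\rho_3,\rho_2,\rho_1)$, so its ``$\rho_2$'' class is the original $\rho_3$. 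Applying Theorem~\ref{2.5} to this swapped partition yields $|\rho_3|\ne 1$ and the path argument gives $|\rho_3|\ge 1$, hence $|\rho_3|\ge 2$.

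The main obstacle I expect is justifying rigorously that the Lewis partition is equivariant under the interchange $r\leftrightarrow s$, i.e.\ verifying from the definitions in Remark~\ref{rem2.2} that the partition built from $s$ is indeed $(\rho_4,\rho_3,\rho_2,\rho_1)$. This amounts to showing that the vertices at distance $3$ from $s$ are exactly those in $\rho_1$, the vertices at distance $2$ from $s$ are exactly those in $\rho_2$, and so on. All of these identifications follow from the fact (noted in Remark~\ref{rem2.2}) that $\rho_1\cup\rho_2$ and $\rho_3\cup\rho_4$ each induce complete subgraphs and that every edge crossing between them lies between $\rho_2$ and $\rho_3$, but it requires careful distance bookkeeping against the four cases.
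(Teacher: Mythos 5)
Your argument is correct, but note that the paper does not prove this statement at all: it is quoted verbatim from Ebrahimi--Iranmanesh \cite[Lemma 4.1]{EII}, so there is no in-paper proof to match. Your route is a genuine alternative to the original one: you deduce the lemma from the Lewis--Meng criterion (Theorem~\ref{2.5}, read as ``$\Delta(G)$ has a cut vertex if and only if $|\rho_2|=1$'', which is how ``$1$-connected'' is used there) together with the observation that the Lewis partition built from the other endpoint $s$ of a diametral pair is $(\rho_4,\rho_3,\rho_2,\rho_1)$, whereas the original source, which predates Lewis--Meng, argues directly from the partition: when $|\rho_2|=1$ (or $|\rho_3|=1$) the unique vertex of that class separates $\rho_1$ (resp.\ $\rho_4$) from the rest, and when both classes have size at least $2$ one checks that deleting any single vertex still leaves a crossing edge between $\rho_2$ and $\rho_3$, so no cut vertex exists. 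Your approach is shorter given the quoted lemma; the direct approach is self-contained and does not need the $2019$ result. The one step you flag as an obstacle does go through, and only uses facts recorded in Remark~\ref{rem2.2}: since $\rho_3\cup\rho_4$ is complete, every vertex of $(\rho_3\cup\rho_4)\setminus\{s\}$ is at distance $1$ from $s$; every $u\in\rho_2$ is nonadjacent to $s$ (no $\rho_4$--$(\rho_1\cup\rho_2)$ edges) but adjacent by definition to some vertex of $\rho_3$, hence $d(s,u)=2$; every $u\in\rho_1$ has all neighbours in $\rho_1\cup\rho_2$ while all neighbours of $s$ lie in $\rho_3\cup\rho_4$, so $d(s,u)=3$; and the ``vice versa'' clause (every vertex of $\rho_3$ has a neighbour in $\rho_2$) shows the class of vertices adjacent to $s$ and to some distance-$2$ vertex is exactly $\rho_3$, with $\rho_4$ playing the role of $\rho_1$. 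With that verified, Theorem~\ref{2.5} applied to both partitions gives $|\rho_2|\neq 1$ and $|\rho_3|\neq 1$, and your geodesic argument gives both classes nonempty, so the proof is complete; the only cosmetic caveat is to state explicitly that you are reading ``$1$-connected'' as ``vertex connectivity exactly one.''
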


\begin{defn}\cite{Lewis3}\label{def2.8} Using direct product, one can find bigger groups from smaller groups. The same may be used to construct higher order character degree graphs. Given groups $A$ and $B,$ we have that $\rho(A\times B) =  \rho(A) \cup \rho(B).$ Define an edge between vertices $p$ and $q$ in $\rho(A\times B)$  if any of the following is satisfied:
\begin{itemize}
\item [\rm (i)] $p , q\in \rho (A)$ and there is an edge between $p$ and $q$ in $\Delta(A);$
\item [\rm (ii)] $p, q \in \rho (B)$ and there is an edge between $p$ and $q$ in $\Delta(B);$
\item [\rm (iii)] $p\in\rho (A)$ and $q\in\rho (B);$
\item [\rm (iv)]  $p \in\rho (B) $ and $q\in\rho (A).$ 
\end{itemize}
Now we get a higher order character degree graph and it is called direct product.
\end{defn}
\section{ Character Degree Graphs with odd degree}

\begin{thm}\label{3.1}
Let $G$ be a solvable group and $\Delta(G)$ is a complete graph with  $n \geq 2$ is even, then  $\Delta(G)$ is a graph with odd degree.
\end{thm}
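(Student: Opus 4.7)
The statement is essentially a direct consequence of the definition of a complete graph, so the plan is short and combinatorial. First I would observe that $\Delta(G)$, being a complete graph on $n$ vertices, is by definition the graph $K_n$, in which every vertex is adjacent to each of the remaining $n-1$ vertices. Consequently, for every $p \in \rho(G)$ one has $\deg(p) = n-1$.

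Next I would use the parity hypothesis: since $n \geq 2$ is even, the integer $n-1$ is odd. Combining this with the previous step, every vertex of $\Delta(G)$ has odd degree, which is exactly the conclusion.

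I do not foresee any real obstacle here. No character theory is actually needed in the argument itself, since the hypothesis fixes the isomorphism type of $\Delta(G)$ as $K_n$; the solvability of $G$ and the realization of $\Delta(G)$ enter only to ensure that the vertex set is $\rho(G)$ with $|\rho(G)| = n$. If desired, I would simply remark that this recovers the trivial direction of a broader question: when $n$ is odd, the same reasoning instead gives that $\Delta(G) = K_n$ is $(n-1)$-regular with all vertex degrees even, showing that the evenness of $n$ is essential.
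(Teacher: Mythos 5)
Your proposal is correct and follows essentially the same (immediate) argument as the paper: each vertex of the complete graph $\Delta(G)$ has degree $n-1$, which is odd since $n$ is even. The paper's proof additionally cites that every complete graph occurs as a character degree graph of some solvable group, but as you note this is not needed because completeness of $\Delta(G)$ is already hypothesized.
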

\begin{proof}
Every complete graph is a character degree graph \cite[Lemma 2.1]{BL1}. By assumption, $n$ is even. So all the vertices of  $\Delta(G)$ are odd degree.\hfill $\square$ 
\end{proof}
\begin{thm}\label{3.2}
 If $\Delta(G)$ is a non-complete and regular character degree graph of a finite solvable group $G$ with $n$ vertices, then $\Delta(G)$ is not a character degree graph with odd degrees .
\end{thm}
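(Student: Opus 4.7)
The plan is to combine the structural result of Theorem \ref{2.4.1} with the elementary handshake lemma. Since $\Delta(G)$ is assumed non-complete and regular with $n$ vertices, Theorem \ref{2.4.1} tells me immediately that $\Delta(G)$ is $(n-2)$-regular, so every vertex has the same degree $n-2$.

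From here I would argue by contradiction. Suppose, for the sake of contradiction, that every vertex of $\Delta(G)$ has odd degree. Since the common degree is $n-2$, this forces $n-2$ to be odd, and hence $n$ to be odd.

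On the other hand, the handshake lemma (the sum of degrees in any finite graph equals twice the number of edges, hence is even) implies that the number of vertices of odd degree is even. If all $n$ vertices have odd degree, then $n$ must be even. This contradicts the conclusion of the previous paragraph that $n$ is odd. Therefore $\Delta(G)$ cannot have all vertices of odd degree, which is the desired statement.

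I do not foresee a genuine obstacle: the heavy lifting is entirely done by Theorem \ref{2.4.1}, which pins down the regularity degree to $n-2$, and the rest is a one-line parity argument. The only thing to be careful about is that the hypothesis ``non-complete and regular'' really does apply here, so that Theorem \ref{2.4.1} is invoked legally; once that is checked, the two parity constraints ($n$ odd from $n-2$ odd, and $n$ even from the handshake lemma) are visibly incompatible.
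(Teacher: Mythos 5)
Your proof is correct and is essentially the paper's argument: both invoke Theorem~\ref{2.4.1} to get $(n-2)$-regularity and then finish with the parity/handshake observation, the only difference being that you run it as a single contradiction while the paper splits into the cases $n$ even and $n$ odd.
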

\begin{proof}
By \cite[Theorem A]{MZ}, then  $\Delta(G)$ is $n-2$ regular graph. 
\newline  \textbf{Case 1.} $n$ is even.
\newline If $n$ is even, then the regular graph is even. So $\Delta(G)$ is not a character degree graph with odd degree
\newline  \textbf{Case 2.} $n$ is odd.
\newline The number of vertices of odd degree is even. So $n$ can not be odd.\hfill $\square$
\end{proof}

\begin{thm}\label{3.3}
Let $G$ be a solvable group  and $\Delta(G)$ is a graph with odd degree, then $\Delta(G)$ is a block .
\end{thm}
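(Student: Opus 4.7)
I would argue by contradiction: suppose $\Delta(G)$ is not a block, so either $\Delta(G)$ is disconnected or it is connected and contains a cut vertex. For the disconnected case, Manz's theorem (Remark \ref{rem2.1}) gives exactly two components, and P\'{a}lfy's three prime theorem forces each component to be complete, since any non-adjacent pair in one component together with any vertex of the other would form an independent triple. Writing $\Delta(G)=K_a\sqcup K_b$, oddness rules out $a=1$ (an isolated vertex has even degree $0$), and the residual possibility that both components are complete of even order must be excluded using Huppert's classification \cite{HU} (Remark \ref{rem2.3}) in small cases together with further structural constraints on solvable groups with disconnected character degree graphs; this is the most delicate point in the argument.

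For the remaining case, assume $\Delta(G)$ is connected and has a cut vertex $v$, which by Theorem \ref{2.4} is unique. If the diameter of $\Delta(G)$ is at most $2$, Theorem \ref{2.2} says every block of $\Delta(G)$ is complete, so the blocks meeting $v$ are $K_{m_1},\dots,K_{m_k}$ with $k\geq 2$. If $k\geq 3$, choosing one non-$v$ vertex in each of three distinct blocks produces an independent triple, contradicting P\'{a}lfy, so $k=2$. Each non-$v$ vertex in $K_{m_i}$ has degree $m_i-1$, so by oddness each $m_i$ is even; but then $\deg(v)=(m_1-1)+(m_2-1)=m_1+m_2-2$ is even, contradicting the odd-degree hypothesis on $v$.

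If the diameter of $\Delta(G)$ equals $3$, then $|\rho(G)|\geq 5$ (since $|\rho(G)|=4$ would force $\Delta(G)=P_4$, excluded by Theorem \ref{2.3}), and Theorem \ref{2.5} forces $\rho_2=\{v\}$ in the Lewis partition of Remark \ref{rem2.2}. Reading off the degrees from that partition: every $u\in\rho_1$ has $\deg(u)=|\rho_1|$, every $w\in\rho_4$ has $\deg(w)=|\rho_3|+|\rho_4|-1$, and every $y\in\rho_3$ has $\deg(y)=|\rho_3|+|\rho_4|-1+\epsilon_y$ with $\epsilon_y\in\{0,1\}$ indicating whether $y$ is adjacent to $v$. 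Oddness of $\deg(w)$ forces $|\rho_3|+|\rho_4|$ to be even, and then oddness of each $\deg(y)$ forces $\epsilon_y=0$ for every $y\in\rho_3$. This contradicts the defining property of $\rho_2$ that its unique element $v$ is adjacent to some vertex of $\rho_3$, completing the proof. The main obstacle is the disconnected case, where P\'{a}lfy's theorem alone does not eliminate the configuration $K_{2s}\sqcup K_{2t}$ with $s,t\geq 1$ and one needs additional input about solvable groups with disconnected character degree graphs.
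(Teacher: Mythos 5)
Your treatment of the connected cases is correct and is essentially the paper's own argument, written more carefully: for diameter at most $2$ you use Theorem~\ref{2.2}, the uniqueness of the cut vertex (Theorem~\ref{2.4}) and P\'alfy's condition to reduce to exactly two complete blocks meeting at $v$, and the parity count $\deg(v)=m_1+m_2-2$ is exactly the contradiction the paper gestures at; for diameter $3$ you use Theorem~\ref{2.5} to get $\rho_2=\{v\}$ and then read off degrees from the Lewis partition of Remark~\ref{rem2.2}, which is the same parity argument as the paper's Case 1 (your justification that $|\rho(G)|\geq 5$ via Theorem~\ref{2.3}, and your explicit degree formulas, are cleaner than what the paper writes). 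So on the connected side there is nothing missing.

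The genuine issue is the disconnected case, which you raise but explicitly do not close: ``$\Delta(G)$ is a block'' fails in particular when $\Delta(G)$ is disconnected, and you correctly observe that P\'alfy's three-prime theorem only forces the two components to be complete, so a configuration such as $K_{2s}\sqcup K_{2t}$ with $s,t\geq 1$ (all degrees odd) is not eliminated by anything you cite. Appealing vaguely to Huppert's list (Remark~\ref{rem2.3}) and ``further structural constraints'' is not a proof; the relevant known input is P\'alfy's result on disconnected degree graphs \cite{PP1}, which gives component sizes $n_1\leq n_2$ with $n_2\geq 2^{n_1}-1$, and even that inequality does not exclude both components having even order (e.g.\ sizes $2$ and $4$), so additional classification results would be needed. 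You should be aware, however, that the paper's own proof has the same omission: its two cases are organized by the diameter of $\Delta(G)$ and therefore tacitly assume connectivity, so the disconnected situation is never addressed there either. In short, your proposal matches the paper where the paper gives an argument, and the one step you leave open is precisely the step the paper does not treat at all; as written, neither text rules out an all-odd-degree disconnected $\Delta(G)$, and a complete proof (or a connectivity hypothesis in the statement) would have to deal with that case explicitly.
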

\begin{proof}
\newline  \textbf{Case 1.} For, assume that $\Delta(G)$ has diameter $3$. By assumption $\Delta(G)$ is not a block, and each block of $\Delta(G)$ contain even number of vertices. $\Delta(G)$ has two blocks say $B_1$ and $B_2$ and it has a cut vertex. This cut vertex is belongs to $\rho_2$. By  Theorem~\ref{2.5}, $\mid\rho_2\mid =1$. Since $\rho_1 \cup \rho_2$ and $\rho_3 \cup \rho_4$ induce complete subgraphs, each vertex  in  $\rho_3 \cup \rho_4$ is of odd degree.  Since $\mid\rho_2\mid = 1$, and every vertex in  $\rho_3$ is adjacent to some vertex in $\rho_2$ by Remark~\ref{rem2.2}. So each vertex  in $\rho_3$ is of even degree.  Hence  $\Delta(G)$ is not a graph  with odd degree vertices.
\newline  \textbf{Case 2.} Suppose $\Delta(G)$ has diameter at most $2.$  By assumption $\Delta(G)$ is not a block, and each block of $\Delta(G)$ contain even number of vertices. $\Delta(G)$ has two blocks say $B_1$ and $B_2$ and it has a cut vertex. By Theorem~\ref{2.2}, each block of $\Delta(G)$ is a complete graph. So, the cut vertex of $\Delta(G)$ is even degree. Therefore  $\Delta(G)$ is not a graph  with odd degree vertices. \hfill $\square$
\end{proof}

We  prove  that the character degree graph $\Delta(G)$ is a graph  with odd degree vertices only when the diameter of $\Delta(G)$ is $3.$
 
\begin{thm}\label{3.2} Let $G$ be a solvable group with diameter of $\Delta(G)$ is $3.$ Let the  Lewis' partition of $G$ be $\rho(G)=\rho_1\cup\rho_2\cup\rho_3\cup\rho_4.$ Then  $\Delta(G)$ is a graph with only odd degree vertices   if and only if the following conditions are hold:
\begin{itemize}
\item [\rm (i)] $\Delta(G)$ is a block;
\item [\rm (ii)] Both $\mid\rho_1 \cup \rho_2\mid$ and $\mid\rho_3 \cup \rho_4\mid$ are even;
\item [\rm (iii)] The subgraph of $\Delta(G)$ induced by $\rho_2 \cup \rho_3$ is Eulerian.
\end{itemize}
\end{thm}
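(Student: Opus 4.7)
The plan is to prove both directions by direct vertex‑degree computations anchored in the Lewis partition of Remark~\ref{rem2.2}, using the facts that $\rho_1\cup\rho_2$ and $\rho_3\cup\rho_4$ induce complete subgraphs, that vertices of $\rho_1$ (resp.\ $\rho_4$) have no neighbors in $\rho_3\cup\rho_4$ (resp.\ $\rho_1\cup\rho_2$), and that every vertex of $\rho_2$ has at least one neighbor in $\rho_3$ and vice versa. The key quantity will be, for $v\in\rho_2$, the number $n_v$ of $\rho_3$-neighbors of $v$; by the partition this gives the clean decomposition $\deg_{\Delta(G)}(v)=(|\rho_1\cup\rho_2|-1)+n_v$, and symmetrically for $v\in\rho_3$.

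For the necessity direction I would argue as follows. Condition (i) is immediate from Theorem~\ref{3.3}. For (ii), pick any $v\in\rho_1$ (nonempty since $r\in\rho_1$); by the partition structure $v$ is adjacent exactly to all other vertices of $\rho_1\cup\rho_2$, so $\deg(v)=|\rho_1\cup\rho_2|-1$, and the odd-degree hypothesis forces $|\rho_1\cup\rho_2|$ to be even. The same argument with $s\in\rho_4$ yields $|\rho_3\cup\rho_4|$ even. For (iii), take $v\in\rho_2$; by (ii) the first summand of $\deg(v)=(|\rho_1\cup\rho_2|-1)+n_v$ is odd, so odd degree forces $n_v$ even, and Remark~\ref{rem2.2} gives $n_v\ge 1$, hence $n_v\ge 2$. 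Symmetrically every vertex of $\rho_3$ has at least two neighbors in $\rho_2$. Since Theorem~\ref{2.7} supplies $|\rho_2|,|\rho_3|\ge 2$, I can select two cross-edges with four distinct endpoints and splice them using the Hamilton paths inside the cliques on $\rho_2$ and on $\rho_3$ to build a cycle visiting every vertex of $\rho_2\cup\rho_3$, which is exactly the Eulerian structure demanded by (iii) in the sense of the preliminaries.

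For the sufficiency direction, the degrees of vertices in $\rho_1$ and in $\rho_4$ come out odd immediately from (ii) via the same formula $\deg(v)=|\rho_1\cup\rho_2|-1$ (or $|\rho_3\cup\rho_4|-1$). For $v\in\rho_2$, I would combine the spanning cycle guaranteed by (iii) with the counts forced by (ii) to extract the parity of $n_v$ needed to make $(|\rho_1\cup\rho_2|-1)+n_v$ odd, and the argument for $v\in\rho_3$ is completely symmetric. Condition (i) is needed here to justify applying the Lewis picture with $|\rho_2|,|\rho_3|\ge 2$ (via Theorem~\ref{2.7}), so that (iii) is not vacuous.

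The principal obstacle will be condition (iii) in both directions: on the forward side, converting the parity information $n_v\ge 2$ even into an actual spanning cycle of the induced subgraph, and on the backward side, reading back from that cycle enough parity control on the bipartite cross-edge counts to recover odd full-graph degrees while the clique contributions from $\rho_1$ and $\rho_4$ sit outside the induced subgraph. I expect that careful bookkeeping of the four sizes $|\rho_i|$, constrained by (ii), together with Theorem~\ref{2.7}, will supply exactly the combinatorial slack to make the two sides of the biconditional match.
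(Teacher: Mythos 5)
Your treatment of (i) and (ii) coincides with the paper's (Theorem~\ref{3.3} for (i), and the count $\deg(v)=|\rho_1\cup\rho_2|-1$ for $v\in\rho_1$, dually for $\rho_4$), and the parity facts you extract on the way to (iii) --- that every $v\in\rho_2$ has an even number $n_v\geq 2$ of $\rho_3$-neighbours, and symmetrically for $\rho_3$ --- are exactly the heart of the paper's step (iii). The divergence is what you then do with them. The paper does not treat (iii) as the existence of a spanning cycle: its argument invokes the parity characterisation of Eulerian graphs (``non-Eulerian $\Rightarrow$ some vertex of odd degree in the subgraph''), so in the paper condition (iii) functions as the statement that each vertex of $\rho_2\cup\rho_3$ has an even number of neighbours across the $\rho_2$--$\rho_3$ divide (connectedness being automatic, since $\rho_2$ and $\rho_3$ are cliques joined by at least one cross edge). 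With that reading, both directions are immediate from the decomposition $\deg(v)=(|\rho_1\cup\rho_2|-1)+n_v$ that you yourself wrote down. Your forward step instead converts these parity facts into a Hamiltonian cycle (via a size-two cross matching and Hamilton paths in the two cliques --- a correct construction) and thereby discards the parity information.

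That loss is fatal for your converse, which is precisely the step you flag as the principal obstacle. A spanning cycle of the subgraph induced by $\rho_2\cup\rho_3$ carries no information whatsoever about the parities of the cross-degrees $n_v$, so it cannot be combined with (ii) to force $(|\rho_1\cup\rho_2|-1)+n_v$ odd. Concretely: take $|\rho_1|=|\rho_4|=1$, $\rho_2=\{a_1,a_2,a_3\}$, $\rho_3=\{b_1,b_2,b_3\}$, with cross edges forming the perfect matching $a_ib_i$. Then (ii) holds, the induced subgraph on $\rho_2\cup\rho_3$ is the triangular prism, which contains the spanning cycle $a_1a_2a_3b_3b_2b_1a_1$, yet each $a_i$ has total degree $3+1=4$, which is even. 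Whether or not this particular configuration is realised by a solvable group is beside the point: your argument, like the paper's, uses only the Lewis-partition facts of Remark~\ref{rem2.2} together with Theorem~\ref{2.7}, so it would have to be valid for every configuration satisfying them, and this one defeats it. The repair is to keep the parity statements you already proved ($n_v$ even for all $v\in\rho_2$, and the symmetric statement for $\rho_3$) as the actual content of (iii), i.e.\ to read ``Eulerian'' through the even-degree characterisation as the paper's proof does; then the converse is a one-line computation, and the explicit cycle construction, while valid, establishes only a strictly weaker property and should be dropped.
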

\begin{proof}
Assume that $\Delta(G)$ with odd degree vertices.

(i) If $\Delta(G)$ is not a block, by   Theorem~\ref{3.3}  $\Delta(G)$ is not a graph with odd degree vertices. Hence $\Delta(G)$ is a block.
 
(ii) We claim that both $\mid\rho_1 \cup \rho_2\mid$ and $\mid\rho_3 \cup \rho_4\mid$ are even. Suppose not, let us assume that $\mid\rho_1\cup \rho_2\mid$ is odd. As mentioned in Remark~\ref{rem2.2}, $\rho_1\cup \rho_2$ is a complete graph and no prime in $\rho_1$ is adjacent to any prime in $\rho_3\cup \rho_4.$ Hence the vertices in $\rho_1$ are of even degree and so $\Delta(G)$ is not a graph with odd degree vertices, which is a contradiction. Therefore $\mid\rho_1 \cup \rho_2\mid$ is even. 

Suppose that $\mid\rho_3\cup \rho_4\mid$ is odd. As mentioned in Remark~\ref{rem2.2}, $\rho_3\cup \rho_4$ is a complete graph and no prime in $\rho_4$ is adjacent to any prime in
$\rho_1\cup \rho_2.$ Hence the vertices in $\rho_4$ are of even degree and so $\Delta(G)$is not a graph with odd degree vertices, which is a contradiction. Therefore $|\rho_3 \cup \rho_4|$ is even. 

(iii) Suppose the subgraph induced by $\rho_2\cup \rho_3$ is non Eulerian.  One can refer Remark~\ref{rem2.2} for the following facts. No prime in $\rho_1$ is adjacent to any prime in $\rho_3\cup \rho_4.$ Similarly no prime in $\rho_4$ is adjacent to any prime in $\rho_1\cup \rho_2.$  Since $\rho_1 \cup \rho_2$ and $\rho_3 \cup \rho_4$ are complete graphs with even number of vertices. Hence each vertex in $\rho_2$ and $\rho_3$ is of odd degree in sub graphs induced by $\rho_1 \cup \rho_2$ and $\rho_3 \cup \rho_4$. Therefore  the vertices in $\rho_1$ and $\rho_4$ are of odd degree. But the degree of each vertex in $\rho_2$ is the sum of the degree of that vertex in $\rho_2$ in the subgraph induced by $\rho_2\cup \rho_3$ and degree of that vertex in $\rho_2$ in  the subgraph induced by $\rho_1\cup \rho_2$.  Similarly degree of each vertex in $\rho_3$ is the sum of degree of that vertex in $\rho_3$ in the subgraph induced by $\rho_2\cup \rho_3$ and degree of that vertex in $\rho_3$ in the subgraph induced by $\rho_3\cup \rho_4$. By assumption $\rho_2\cup \rho_3$ is non Eulerian and hence there are vertices of odd degree in $\rho_2$ or $\rho_3$ in the subgraph induced by $\rho_2\cup \rho_3.$ Hence some vertices in $\rho_2$  or  $\rho_3$ are of even degree, which is a contradiction to $\Delta(G)$ is  a graph with odd degree vertices. Therefore the vertex induced subgraph $\rho_2\cup \rho_3$ is Eulerian.

Conversely, assume that conditions (i)-(iii) are true. According to Lewis partition, the subgraphs induced by $\rho_1\cup \rho_2$ and $\rho_3\cup \rho_4$ are complete subgraphs of $\Delta(G),$  no prime in $\rho_1$ is adjacent to any prime in $\rho_3\cup \rho_4$ and  no prime in $\rho_4$ is adjacent to any prime in $\rho_1\cup \rho_2.$ Since both $\mid\rho_1 \cup \rho_2\mid$ and $\mid\rho_3 \cup \rho_4\mid$ are even, vertices in $\rho_1$ and $\rho_4$ are of odd degree. Since $\Delta(G)$ is a block, by Theorem~\ref{2.7} we get that $\mid\rho_2\mid, \mid\rho_3\mid\geq 2.$ Since the subgraph induced by $\rho_2 \cup \rho_3$ is Eulerian, we get that vertices in $\rho_2$ and $\rho_3$ are of odd degree. According to Lewis partition, all subsets $\rho_1, \rho_2, \rho_3$ and $\rho_4$ in $\rho(G)$  are non-empty disjoint subsets. Therefore all the vertices in $\rho(G)$ are odd degree. Hence $\Delta(G)$ is a graph with odd degree vertices. \hfill $\square$ 
\end{proof}

\section{Number of  Character Degree Graphs with odd degree}
In this section,  we obtain  the number of  character degree graphs with odd degree in terms of number of vertices. Actually, we obtain below that the number of character degree graphs with $n$ vertices ($n \geq 6$ is even) which are non regular    by assuming that the diameter of $\Delta(G)$ is two. 
 
\begin{thm} \label{3.4}
The character degree  graphs with n vertices ($ n \geq 6$ is even)   for some solvable group $G$  has atleast $1$ non regular  character degree graphs with odd degree,   Here  $\Delta(G)$  is a block with diameter $2$. 
\end{thm}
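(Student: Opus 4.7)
The plan is to exhibit, for each even $n \geq 6$, an explicit solvable group $G$ whose character degree graph $\Delta(G)$ is non-regular, has all vertices of odd degree, is a block, and has diameter~$2$. I would first handle $n = 6$ and then extend to general even $n$ via a direct product (in the sense of Definition~\ref{def2.8}).

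For $n=6$, I consider the graph $\Gamma = K_6 \setminus C_4$, i.e.\ the complete graph on $\{p_1,\ldots,p_6\}$ with the $4$-cycle $p_1\,p_3\,p_2\,p_4\,p_1$ deleted (so the diagonal edges $p_1p_2$ and $p_3p_4$ remain). A direct count gives the degree sequence $(3,3,3,3,5,5)$, so every vertex is of odd degree and $\Gamma$ is non-regular. Since $p_5$ and $p_6$ are universal in $\Gamma$, the graph is $2$-connected and every pair of non-adjacent vertices shares $p_5$ (and $p_6$) as a common neighbor, so $\Gamma$ is a block of diameter~$2$.

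To realize $\Gamma$ as $\Delta(G)$, I take $G = G_1 \times G_2$ where $G_1$ is a solvable group with $\Delta(G_1) = 2K_2$ on $\{p_1,p_2,p_3,p_4\}$ (the two disjoint edges $p_1p_2$ and $p_3p_4$) and $G_2$ is a solvable group with $\Delta(G_2) = K_2$ on $\{p_5,p_6\}$. By Definition~\ref{def2.8}, every cross edge between $\rho(G_1)$ and $\rho(G_2)$ appears in $\Delta(G_1\times G_2)$, so the only missing edges are the four non-edges of $\Delta(G_1)$, which reproduce exactly the $4$-cycle removed from $K_6$. For $n > 6$ even, I keep $G_1$ as above and replace $G_2$ by a solvable group with $\Delta(G_2) = K_{n-4}$ on $\{p_5,\ldots,p_n\}$; such a group exists by \cite[Lemma~2.1]{BL1}. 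The resulting $\Delta(G)$ is $K_n$ with the same four edges of the $C_4$ removed, so its degree sequence is $(n-3,n-3,n-3,n-3,n-1,\ldots,n-1)$. All entries are odd (since $n$ is even) and two distinct values appear, so the graph is non-regular with all odd degree vertices, while the $n-4 \geq 2$ universal cross vertices preserve $2$-connectedness and diameter~$2$.

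The main obstacle is the realization of $\Delta(G_1) = 2K_2$ as the character degree graph of a solvable group. Since $2K_2$ is disconnected with two complete components of size~$2$, this uses the theory of disconnected character degree graphs of solvable groups (Remark~\ref{rem2.1} guarantees at most two components); one produces $G_1$ explicitly via a Frobenius-type construction in which the irreducible character degrees split into two coprime parts, one supported on $\{p_1,p_2\}$ and the other on $\{p_3,p_4\}$, so that no character degree is divisible by a mixed product such as $p_1p_3$. Once this realization is in place, the rest of the argument is a routine verification of the block, diameter, odd-degree and non-regularity conditions from the degree sequence computed above.
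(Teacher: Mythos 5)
Your target family of graphs is in fact the same as the paper's: $K_n$ with a $4$-cycle on four fixed vertices deleted, which for $n=6$ is exactly the graph of Figure~2, and your verification of the degree sequence $(n-3,n-3,n-3,n-3,n-1,\dots,n-1)$, of $2$-connectedness and of diameter $2$ is fine. The gap is in the realization step, and it is fatal as written: there is no solvable group $G_1$ with $\Delta(G_1)=2K_2$. By P\'alfy's classification of disconnected character degree graphs of solvable groups (the paper's reference \cite{PP1}), a disconnected $\Delta(G_1)$ has two complete components whose sizes $a\le b$ satisfy $b\ge 2^{a}-1$; for components of sizes $(2,2)$ this forces $b\ge 3$, a contradiction. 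Equivalently, $K_2\cup K_2$ does not appear in Huppert's list of all possible $4$-vertex graphs $\Delta(G)$ for solvable $G$ recalled in Remark~\ref{rem2.3}. So the ``Frobenius-type construction in which the degrees split into two coprime parts'' that you invoke cannot exist, and since every $n$ in your scheme is built on this $G_1$, the construction fails for all even $n\ge 6$, not just for the base case.

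The repair is essentially what the paper does: the $6$-vertex graph is not obtained as a join of $2K_2$ with $K_2$ from smaller character degree graphs, but is taken directly from the Bissler--Laubacher--Lewis classification of character degree graphs with six vertices \cite{BL}, which shows that the graph of Figure~2 does occur for a solvable group. From there the paper extends to larger even $n$ by repeatedly forming direct products with a group whose graph is $K_2$ (Definition~\ref{def2.8}); your variant of joining with a single complete graph $K_{n-6}$ (realizable by \cite[Lemma 2.1]{BL1}) would work equally well once the $n=6$ base case is sourced from the classification rather than from the nonexistent $2K_2$ group.
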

\begin{proof} In this proof all the character degree graphs are constructed via direct products only  by definition~\ref{def2.8}.
 \newline\textbf{Case i:}$\mid\rho(G)\mid = 6.$ Bissler et. al ~\cite[p. 503] {BL} classified all character degree graphs with $6$ vertices expect $9$ graphs  listed in \cite[p. 509]{BL}. But none of these $9$ graphs are character degree graphs with odd degree.  On the other hand, among  twelve graphs, there exists only one non regular graph with all the vertices as odd degree and the same is given in Figure $2$.
\begin{figure}[ht]
\centering
\begin{tikzpicture}
\draw[fill=black] (0,1) circle (3pt);
\draw[fill=black] (0,-1) circle (3pt);
 \draw[fill=black] (-1,0) circle (3pt);
\draw[fill=black] (-2,0) circle (3pt);
 \draw[fill=black] (1,0) circle (3pt);
\draw[fill=black] (2,0) circle (3pt);
\draw[thick] (0,1) --(0,-1) --(1,0) --(2,0);
\draw[thick] (0,-1) --(-1,0) --(-2,0) ;
\draw[thick] (0,-1) --(-2,0);
\draw[thick] (0,-1) --(2,0);
\draw[thick] (0,1) --(-2,0);
\draw[thick] (0,1) --(2,0);
\draw[thick] (0,1) --(-1,0);
\draw[thick] (0,1) --(1,0);
\end{tikzpicture}
 \caption{Graph with six vertices}
\end{figure}
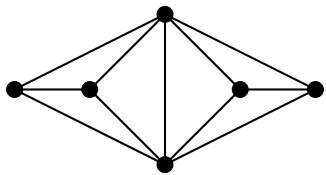
\newline\textbf{Case ii:} $\mid\rho(G)\mid = 8.$ Let $\Delta(G)$ be the character degree graphs as in Figure $2.$  Let $\Delta(H)$ be $K_2$ and $\rho(H)$ are disjoint from $\rho(G)$.  By direct product,  $\Delta(G \times H)$  have $8$ vertices in which $4$ vertices have degree $5$ and remaining $4$ vertices have degree $7$.

The number of  character degree graphs with odd degree which are not regular graph is $1$.
\newline\textbf{Case iii:} $\mid\rho(G)\mid = 10.$ Let $\Delta(G)$ be $8$ vertices graph as in case $3$ and $\Delta(H)$ be $K_2$ and $\rho(H)$ are disjoint from $\rho(G)$.  By direct product,  $\Delta(G \times H)$  have $10$ vertices in which $4$ vertices have degree $7$ and remaining $6$ vertices have degree $9$.

The number of  character degree graphs with odd degree which are not regular graph is $1$.
\newline By induction method,  Let $\Delta(G)$ be the character degree graph with $n-2$ vertices in which $4$ vertices have degree $n-5$ and remaining $n-6$ vertices have degree $n-3$ and $\Delta(H)$ be $K_2$ which are disjoint from $\rho(G)$.  By direct product,  $\Delta(G \times H)$  have $n$ vertices in which $4$ vertices have degree $n-3$ and remaining $n-4$ vertices have degree $n-1$. \hfill $\square$ 

\begin{rem}\normalfont\label{rem2.4} By  Theorem~\ref{2.6}  solvable group in Theorem~\ref{3.4}  has Fitting height  is at least 3.
\end{rem}
\end{proof}

\section*{Acknowledgment}
The second author is partially supported by DST-FIST (Letter No: SR / FST / MSI - $115$ / $2016$ dated: $10$ - $10$ - $2017$ ).

\end{document}